\newtheorem{theorem}{Theorem}[section]
\newtheorem{lemma}[theorem]{Lemma}
\newtheorem{definition}[theorem]{Definition}
\newtheorem{question}[theorem]{Question}
\newtheorem{proposition}[theorem]{Proposition}
\newtheorem{corollary}[theorem]{Corollary}
\numberwithin{equation}{section}
\begin{document}

\begin{center}
Olena Karlova, Volodymyr Mykhaylyuk

{\it Yurii Fedkovych Chernivtsi National University}

CROSS-TOPOLOGY AND LEBESGUE TRIPLES

\end{center}

\medskip
\hrule

\medskip

\section{Introduction}

Let $X$, $Y$ and $Z$ be topological spaces. For a mapping
$f:X\times Y\to Z$ and a point $(x,y)\in X\times Y$ we write $f^x(y)=f_y(x)=f(x,y)$.
A mapping $f:X\times Y\to Z$ is said to be  {\it separately continuous}, if mappings $f^x:Y\to Z$ and $f_y:X\to Z$ are continuous for all $x\in X$ and $y\in Y$.
If  $f:X\to Y$ is a pointwise limit of a sequence of separately continuous mappings $f_n:X\to Y$, then $f$ is {\it a Baire-one mapping} or $f$ {\it belongs to the first Baire class}.

In 1898 H.~Lebesgue~\cite{Leb1} proved that if $X=Y=\mathbb R$ then every separately continuous function $f:X\times Y\to Z$ belongs to the first Baire class. A collection of topological spaces $(X,Y,Z)$ with this property we call {\it a Lebesgue triple}.

The result of Lebesgue was generalized by many mathematicians (see \cite{Hahn, Ru, KMas3, BanakhT,K1} and the references given there). In particular, A.~Kalancha and V.~Maslyuchenko~\cite{KMas3} showed that $({\mathbb R},{\mathbb R},Z)$ is a Lebesgue triple if $Z$ is a topological vector space. T.~Banakh~\cite{BanakhT} proved that $({\mathbb R},{\mathbb R},Z)$ is a Lebesgue triple in the case when $Z$ is an equiconnected space.  It follows from \cite[Theorem~3]{K1} that for every metrizable arcwise connected and locally arcwise connected space $Z$ a collection $({\mathbb R},{\mathbb R},Z)$ is a Lebesgue triple.

In the connection with the above-mentioned results V.~Maslyuchenko put the following question.
\begin{question}\label{q2}
Does there exist a topological space $Z$ such that  $({\mathbb R},{\mathbb R},Z)$ is not a Lebesgue triple?
\end{question}

Here we give the positive answer to this question. Moreover, we prove that $(X,Y,Z)$ is not a Lebesgue triple for topological spaces $X$ and $Y$ of a wide class of spaces, which includes, in particular, all spaces $\mathbb R^n$, and for a space $Z=X\times Y$ endowed with the cross-topology (see definitions in Section~\ref{sec:2}). In Sections~\ref{sec:2} and \ref{sec:3} we give some auxiliary properties of this topology. Section~\ref{sec:4} contains a proof of the main result. In the last section we show that connectedness-like conditions on spaces $X$ and $Y$ in the main result are essential. We prove that $(X,Y,Z)$ is a Lebesgue triple when $X$ is a strongly zero-dimensional metrizable space, $Y$ and $Z$ are arbitrary topological spaces.

\section{Compact sets in the cross-topology}\label{sec:2}

Let  $X$ and $Y$ be topological spaces. We denote by $\gamma$  the collection of all subsets $A$ of $X\times Y$ such that for every point $(x,y)$ of $A$ there exist such neighborhoods $U$ and $V$ of $x$ and $y$ in $X$ and $Y$, respectively, that  $(\{x\}\times V)\bigcup (U\times \{y\})\subseteq A$. The system $\gamma$ forms a topology on $X\times Y$, which is called {\it the cross-topology}. A product $X\times Y$ with the cross-topology we denote by $(X\times Y, \gamma)$.

For a point $p=(x,y)\in X\times Y$ by ${\rm cross}(p)$ we denote the set $(\{x\}\times Y)\cup (X\times\{y\})$.
For an arbitrary $A\subseteq X\times Y$ let  ${\rm cross}(A)=\bigcup\limits_{p\in A}{\rm cross}(p)$.

\begin{proposition}\label{pr:1}
  Let $X$ and$Y$ be $T_1$-spaces and $(p_n)_{n=1}^\infty$ a sequence of points $p_n=(x_n,y_n)\in X\times Y$ such that $x_n\ne x_m$ and $y_n\ne y_m$ if $n\ne m$. Then $P=\{p_n:n\in\mathbb N\}$ is a $\gamma$-discrete set.
\end{proposition}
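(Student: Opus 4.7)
My plan is to prove that $P$ is $\gamma$-discrete by producing, for each $n$, a $\gamma$-open set $W_n\subseteq X\times Y$ with $W_n\cap P=\{p_n\}$. The natural candidate is the complement $W_n:=(X\times Y)\setminus\{p_m:m\neq n\}$, which plainly meets $P$ in exactly $\{p_n\}$; so everything reduces to checking that $W_n\in\gamma$.

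To verify this, I would fix an arbitrary $(a,b)\in W_n$ and try to produce open neighborhoods $U$ of $a$ and $V$ of $b$ with $(\{a\}\times V)\cup(U\times\{b\})\subseteq W_n$; equivalently, neither $\{a\}\times V$ nor $U\times\{b\}$ should meet any $p_m$ with $m\neq n$. The key observation is that the horizontal slice $\{a\}\times Y$ contains at most one element of $P$, because distinctness of the $x_m$ allows $a=x_k$ to hold for at most one index $k$. If no such $k$ exists, or if the unique such $k$ equals $n$, any open $V\ni b$ will do. Otherwise the only forbidden point on this slice is $p_k=(a,y_k)$, and since $(a,b)\in W_n$ we have $(a,b)\neq p_k$, forcing $y_k\neq b$; using that $Y$ is $T_1$, I can choose open $V$ with $b\in V\subseteq Y\setminus\{y_k\}$.

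A symmetric argument, invoking the $T_1$-ness of $X$ and the distinctness of the $y_m$, produces an open $U\ni a$ for which $U\times\{b\}$ is disjoint from $\{p_m:m\neq n\}$. Combining yields $(\{a\}\times V)\cup(U\times\{b\})\subseteq W_n$, so $W_n\in\gamma$ and $P$ is $\gamma$-discrete as claimed.

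I do not anticipate a serious obstacle: distinctness of the coordinates reduces the task to avoiding a single offending point on each axis, which is handled by one $T_1$-separation apiece. The argument would collapse if repetition among the $x_m$ or $y_m$ were permitted, since then a single axis line could carry infinitely many $p_m$'s and a finite $T_1$-separation would not suffice in a general space.
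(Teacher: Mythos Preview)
Your proof is correct and rests on the same observation as the paper's: each horizontal or vertical line meets $P$ in at most one point, so a single $T_1$-separation per axis suffices. The paper packages this slightly differently, arguing that \emph{every} subset $Q\subseteq P$ is $\gamma$-closed (by the same line-meets-$P$-at-most-once computation you carry out), whence $P$ is a closed discrete subspace; your version exhibits the isolating $\gamma$-open set $W_n$ directly. The only thing the paper's formulation buys is that $P$ comes out $\gamma$-closed as well as discrete, which is what is actually used in the next proposition; your argument yields this too, since the same reasoning applied to $(a,b)\notin P$ shows $(X\times Y)\setminus P\in\gamma$.
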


\begin{proof}
  Since every one-point subset of $X$ or of $Y$ is closed, $P$ is $\gamma$-closed. Similarly, every subset $Q\subseteq P$ is also $\gamma$-closed. Hence, $P$ is closed discrete subspace of $(X\times Y, \gamma)$.
\end{proof}

\begin{proposition}\label{pr:2}
  Let $X$ and $Y$ be $T_1$-spaces and let $K\subseteq X\times Y$ be a $\gamma$-compact set. Then there exists a countable set  $A\subseteq X\times Y$ such that $K\subseteq {\rm cross}(A)$.
\end{proposition}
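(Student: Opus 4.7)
The plan is to exhibit countable sets $A_X \subseteq X$ and $A_Y \subseteq Y$ with $K \subseteq (A_X \times Y) \cup (X \times A_Y)$; from these, fixing any $x_0 \in X$ and $y_0 \in Y$ (the case $K=\varnothing$ is trivial), the set $A = \{(x,y_0) : x \in A_X\} \cup \{(x_0,y) : y \in A_Y\}$ is countable and satisfies ${\rm cross}(A) \supseteq (A_X \times Y) \cup (X \times A_Y) \supseteq K$. Throughout I write $K_x = \{y : (x,y)\in K\}$ and $K^y = \{x : (x,y)\in K\}$. The whole argument rests on Proposition~\ref{pr:1}, together with the standard fact that a compact space admits no infinite closed discrete subspace.

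First I would define $A_X = \{x \in X : K_x \text{ is infinite}\}$ and verify that it is countable. Suppose otherwise; list pairwise distinct $x_1,x_2,\dots \in A_X$ and inductively pick $y_n \in K_{x_n} \setminus \{y_1,\dots,y_{n-1}\}$, which is possible because $K_{x_n}$ is infinite. The points $p_n = (x_n,y_n) \in K$ satisfy the hypothesis of Proposition~\ref{pr:1}, so $\{p_n : n \in \mathbb N\}$ is a $\gamma$-closed $\gamma$-discrete, and thus infinite closed discrete, subspace of $K$, contradicting the $\gamma$-compactness of $K$.

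Next I would set $\tilde K = K \setminus (A_X \times Y)$; by construction $K_x$ is finite for every $(x,y) \in \tilde K$, since then $x \notin A_X$. Let $A_Y = \pi_Y(\tilde K)$ and suppose for contradiction that $A_Y$ is uncountable. I then build $(x_n,y_n) \in \tilde K$ with pairwise distinct $x_n$'s and $y_n$'s as follows: given the first $n$ points, the set $F = \{y_1,\dots,y_n\} \cup \bigcup_{i=1}^n K_{x_i}$ is finite (each $K_{x_i}$ being finite), so $A_Y \setminus F \neq \varnothing$; choose $y_{n+1} \in A_Y \setminus F$ and any $x_{n+1} \in \tilde K^{y_{n+1}}$. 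Since $y_{n+1} \notin K_{x_i}$ for every $i \leq n$, none of $x_1,\dots,x_n$ lies in $K^{y_{n+1}}$, hence $x_{n+1}$ differs from every earlier $x_i$. Proposition~\ref{pr:1} now forces $\{(x_n,y_n):n\in\mathbb N\}$ to be an infinite closed $\gamma$-discrete subset of $K$, the desired contradiction. Thus $A_Y$ is countable and $\tilde K \subseteq X \times A_Y$, from which $K \subseteq (A_X \times Y) \cup (X \times A_Y)$ follows.

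The main obstacle is the two-step staging. One would like to simply take $A_X$ and $A_Y$ to be the sets of coordinates with infinite fibers, but a point $(x,y) \in K$ can have both $K_x$ and $K^y$ finite and so escape such a naive cover. Removing the slab $A_X \times Y$ first collapses $K$ to a set $\tilde K$ whose vertical fibers are all finite, and exactly this finiteness feeds the second extraction: it guarantees the avoidance set $F$ is finite at each stage, so the induction never gets stuck, and Proposition~\ref{pr:1} can be applied a second time.
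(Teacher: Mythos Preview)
Your proof is correct, but the paper takes a much shorter and in fact stronger route. Rather than isolating coordinates with infinite fibers and then projecting the remainder, the paper simply assumes that no \emph{finite} set $A$ satisfies $K\subseteq{\rm cross}(A)$ and builds the discrete sequence in one stroke: pick $p_1\in K$ arbitrarily and then $p_{n+1}\in K\setminus{\rm cross}(\{p_1,\dots,p_n\})$, which is nonempty by hypothesis; the resulting points automatically have pairwise distinct $x$- and $y$-coordinates, so Proposition~\ref{pr:1} applies immediately. Your two-stage decomposition works, but it is doing more bookkeeping than necessary and yields only a countable $A$, whereas the paper's one-line induction already gives a finite $A$---which is what is actually used downstream in Proposition~\ref{pr:4}. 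Incidentally, your own arguments in both stages would also go through verbatim with ``infinite'' in place of ``uncountable'' (the inductive avoidance sets $F$ are finite either way), so you, too, could have concluded finiteness of $A_X$ and $A_Y$ with no extra effort.
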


\begin{proof}
 Assume that $K\not\subseteq {\rm cross}(A)$ for any finite set $A\subseteq X\times Y$. We choose an arbitrary point $p_1\in K$ and by the induction on  $n\in\mathbb N$ we construct a sequence of points $p_n\in K$ such that $p_{n+1}\in K\setminus{\rm cross}(P_n)$, where $P_n=\{p_k:1\leq k\leq n\}$ for every $n\in\mathbb N$. According to Proposition~\ref{pr:1} the set $P=\{p_n:n\in\mathbb N\}$ is infinite $\gamma$-discrete in  $K$ which contradicts the fact that $K$ is $\gamma$-compact.
\end{proof}

\begin{proposition}\label{pr:3}
  Let $X$ and $Y$ be $T_1$-spaces and let $A$ and $B$ be discrete sets in $X$ and $Y$, respectively. Then the topology of product $\tau$ and the cross-topology $\gamma$ coincide on the set $C={\rm cross}(A\times B)$.
\end{proposition}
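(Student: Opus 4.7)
My plan would be to establish the two inclusions $\tau\subseteq\gamma$ and $\gamma\subseteq\tau$ on $C$ separately. The first is immediate and in fact holds globally on $X\times Y$, since every basic product-open set $U\times V$ is automatically $\gamma$-open: at each of its points $(x,y)$ the cross $(\{x\}\times V)\cup(U\times\{y\})$ is contained in $U\times V$. So the real work would reduce to proving the reverse inclusion: for every $\gamma$-open $W$ and every $p=(x,y)\in W\cap C$, I would need to produce a product-open neighborhood $U\times V$ of $p$ with $(U\times V)\cap C\subseteq W$.

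Starting from the definition of $\gamma$, I would first choose open $U\ni x$ in $X$ and $V\ni y$ in $Y$ such that $(\{x\}\times V)\cup(U\times\{y\})\subseteq W$. The decisive step would then be to shrink $U$ and $V$ so that $U\cap A\subseteq\{x\}$ and $V\cap B\subseteq\{y\}$; this is where the hypothesis on $A$ and $B$ must be used, and it is the step I expect to be the main obstacle, because it forces one to read ``discrete'' in the sense of closed discrete. Concretely, if $x\in A$ then the discreteness of $A$ supplies a neighborhood meeting $A$ only at $x$; if $x\notin A$ then the closedness of $A$ supplies a neighborhood disjoint from $A$. Intersecting these with the original $U$ gives the refined neighborhood, and $V$ is handled symmetrically.

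With the refined $U$ and $V$ in hand, the inclusion $(U\times V)\cap C\subseteq W$ would follow from the fact that $C={\rm cross}(A\times B)=(A\times Y)\cup(X\times B)$: any $(x',y')\in(U\times V)\cap C$ must satisfy $x'\in A$ or $y'\in B$. In the first subcase, $x'\in U\cap A\subseteq\{x\}$ forces $x'=x$, so $(x',y')\in\{x\}\times V\subseteq W$; in the second, $y'\in V\cap B\subseteq\{y\}$ forces $y'=y$, so $(x',y')\in U\times\{y\}\subseteq W$. Either way $(x',y')\in W$, which shows that $W\cap C$ is $\tau$-open in $C$ and completes the argument. Apart from this refinement, the proof is a routine case split, so I do not anticipate any further difficulties.
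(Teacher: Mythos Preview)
Your proof is correct and follows essentially the same route as the paper: both shrink $U$ and $V$ so that each meets $A$ (resp.\ $B$) in at most one point, which reduces $C\cap(U\times V)$ to a piece of a single cross on which the two topologies obviously agree. Your caveat that ``discrete'' must be read as ``closed discrete'' is apt---the paper's own proof makes the same tacit assumption when it asserts that such neighborhoods exist for an arbitrary point of $C$.
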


\begin{proof}
Fix $p=(x,y)\in C$. Using the discreteness of $A$ and $B$, we choose such neighborhoods $U$ and $V$ of  $x$ and $y$ in $X$ and $Y$, respectively, that $|U\cap A|\leq 1$ and $|V\cap B|\leq 1$. Then $C\cap (U\times V)= C\cap {\rm cross}(c)$ for some point $c\in C$. Then $\tau=\gamma$ on the set $C\cap (U\times V)$.
\end{proof}

Propositions~\ref{pr:2} and \ref{pr:3} immediately imply the following characterization of $\gamma$-compact sets.
\begin{proposition}\label{pr:4}
  Let $X$ and $Y$ be $T_1$-spaces and $K\subseteq X\times Y$. Then $K$ is $\gamma$-compact if and only if when
  \begin{enumerate}
    \item   $K$ is compact;

    \item  $K\subseteq {\rm cross}(C)$ for a finite set $C\subseteq X\times Y$.
  \end{enumerate}
\end{proposition}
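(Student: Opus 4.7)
The plan is to reduce Proposition~\ref{pr:4} to Propositions~\ref{pr:2} and \ref{pr:3} via one simple observation about how the two topologies compare.

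First I would record the basic fact that the product topology $\tau$ is coarser than the cross-topology $\gamma$: given any $\tau$-basic open set $U\times V$ and any point $(x,y)\in U\times V$, the neighborhoods $U$ of $x$ and $V$ of $y$ witness that $(\{x\}\times V)\cup(U\times\{y\})\subseteq U\times V$, so $U\times V\in\gamma$. Consequently every $\gamma$-compact set is $\tau$-compact, which gives condition~(1) of the "only if" direction.

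For the second necessary condition, I would appeal to (the proof of) Proposition~\ref{pr:2}: the inductive construction there produces, under the assumption that $K\not\subseteq\mathrm{cross}(C)$ for every \emph{finite} $C$, an infinite $\gamma$-discrete subset of the $\gamma$-compact set $K$, which is impossible. Hence some finite $C\subseteq X\times Y$ satisfies $K\subseteq\mathrm{cross}(C)$, giving condition~(2).

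For the "if" direction, assume $K$ is $\tau$-compact and $K\subseteq\mathrm{cross}(C)$ for a finite set $C=\{(a_1,b_1),\dots,(a_n,b_n)\}$. Set $A=\{a_1,\dots,a_n\}$ and $B=\{b_1,\dots,b_n\}$. In a $T_1$-space every finite set is discrete, so $A$ and $B$ are discrete in $X$ and $Y$, respectively. Since $\mathrm{cross}(C)\subseteq\mathrm{cross}(A\times B)$, Proposition~\ref{pr:3} yields $\tau=\gamma$ on the set containing $K$; thus the subspace topologies on $K$ inherited from $\tau$ and from $\gamma$ coincide, and $\tau$-compactness of $K$ transfers to $\gamma$-compactness. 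The only mildly delicate point is to notice that Proposition~\ref{pr:2}'s proof actually delivers a \emph{finite} $C$ rather than merely a countable one, which is exactly what is needed to invoke Proposition~\ref{pr:3} with the finite discrete sets $A$ and $B$; everything else is routine.
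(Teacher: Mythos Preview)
Your argument is correct and matches the paper's intent: the paper simply states that Propositions~\ref{pr:2} and~\ref{pr:3} ``immediately imply'' Proposition~\ref{pr:4} without writing out the details, and you have supplied exactly those details. Your observation that the \emph{proof} of Proposition~\ref{pr:2} (rather than its stated conclusion about a countable set) already furnishes a finite $C$ is precisely the point needed to make the deduction go through.
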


\section{Connected sets and cross-mappings}\label{sec:3}

\begin{proposition}\label{pr:3.1}
Let $X$ and $Y$ be connected spaces, $A$ a dense subset of $X$, let $\O\ne B\subseteq Y$ and $C\subseteq X\times Y$ be such sets that ${\rm cross}(A\times B)\subseteq C$. Then $C$ is connected.
\end{proposition}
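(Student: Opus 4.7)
The plan is to exhibit $D := \mathrm{cross}(A\times B)$ as a connected and $\gamma$-dense subset of $X\times Y$; since $D \subseteq C \subseteq X\times Y = \overline{D}^{\,\gamma}$, invoking the standard fact that any set squeezed between a connected set and its closure is itself connected will then give the result. I work throughout in the cross-topology, consistent with the focus of the section.

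The first step is to check that each ``line'' inside $D$ is connected. The subspace topology on $\{x\}\times Y$ inherited from $(X\times Y,\gamma)$ coincides with the given topology of $Y$: for any $V$ open in $Y$ the set $X\times V$ is $\gamma$-open and meets $\{x\}\times Y$ in $\{x\}\times V$, while conversely the very definition of $\gamma$ forces every $\gamma$-open neighborhood of $(x,y)$ to contain $\{x\}\times V'$ for some open $V'\ni y$. Thus $\{a\}\times Y$ is homeomorphic to $Y$ (hence connected) for each $a\in A$, and symmetrically $X\times\{b\}$ is connected for each $b\in B$.

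Now I glue these lines. Pick $b_0\in B$ (available since $B\ne\emptyset$) and set $L:=X\times\{b_0\}$. For each $a\in A$ the vertical line $\{a\}\times Y$ meets $L$ at the point $(a,b_0)$, so the standard gluing lemma for connected sets gives that $T := L\cup\bigcup_{a\in A}(\{a\}\times Y)$ is connected. Any remaining horizontal line $X\times\{b\}$ with $b\in B$ meets $T$ at $(a,b)$ for any $a\in A$ (and $A$ is nonempty by density in the nonempty space $X$), so a second application of the gluing lemma yields the connectedness of $D = T\cup\bigcup_{b\in B}(X\times\{b\})$.

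For $\gamma$-density of $D$, take any $(x,y)\in X\times Y$ and any basic $\gamma$-neighborhood $W$ of $(x,y)$; by the definition of the cross-topology, $W\supseteq(\{x\}\times V)\cup(U\times\{y\})$ for some open $U\ni x$, $V\ni y$. By density of $A$ there is $a\in A\cap U$, and then $(a,y)\in U\times\{y\}\subseteq W$ while $(a,y)\in\{a\}\times Y\subseteq D$. Hence $W\cap D\ne\emptyset$, so $\overline{D}^{\,\gamma}=X\times Y$, and the closure lemma delivers the connectedness of $C$. I expect the only mildly delicate step to be the identification of the subspace topology along a single line; the remaining arguments are routine applications of the gluing and closure facts for connected sets.
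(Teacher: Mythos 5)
Your proof is correct and follows essentially the same route as the paper's: both arguments reduce to the facts that ${\rm cross}(A\times B)$ is connected (because the horizontal and vertical lines are connected and pairwise overlapping) and dense in $C$, and then pass from this dense connected subset to $C$ itself. Your write-up is merely more explicit — identifying the subspace topology on each line and verifying density in the cross-topology — and it establishes the formally stronger $\gamma$-connectedness, which implies connectedness in the coarser product topology as well.
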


\begin{proof}
Let $U$ and $V$ be open subsets of $C$ such that  $C=U\sqcup V$. Since $X$ and $Y$ are connected, for every $p\in A\times B$ either ${\rm cross}(p)\subseteq U$, or ${\rm cross}(p)\subseteq V$. Since ${\rm cross}(p)\cap {\rm cross}(q)\ne\emptyset$ for distinct points  $p,q\in X\times Y$, ${\rm cross}(A \times B)\subseteq U$ or ${\rm cross}(A\times B)\subseteq V$. Taking into account that ${\rm cross}(A\times B)$ is dense in $X\times Y$, and consequently, in $C$, we obtain that  $C\subseteq U$ or $C\subseteq V$. Therefore, $U=\emptyset$ or $V=\emptyset$. Hence, $C$ is connected.
\end{proof}

\begin{corollary}\label{pr:3.2}
Let $X$ and $Y$ be infinite connected $T_1$-spaces. Then the complement to any finite subset of  $X\times Y$ is connected.
\end{corollary}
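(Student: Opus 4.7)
The plan is to reduce the corollary to Proposition~\ref{pr:3.1} by producing the right sets $A$ and $B$. Given a finite $F \subseteq X\times Y$, write $F=\{(a_1,b_1),\dots,(a_n,b_n)\}$, and set $A = X\setminus\{a_1,\dots,a_n\}$, $B = Y\setminus\{b_1,\dots,b_n\}$, $C = (X\times Y)\setminus F$. The corollary will follow once I verify that (i) $A$ is dense in $X$, (ii) $B\ne\emptyset$, and (iii) ${\rm cross}(A\times B)\subseteq C$. Indeed, granted these three facts, Proposition~\ref{pr:3.1} immediately gives that $C$ is connected.

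The inclusion in (iii) is just unpacking definitions: if $(x,y)\in{\rm cross}(A\times B)$ coincided with some $(a_i,b_i)\in F$, then there would exist $(a,b)\in A\times B$ with either $x=a$ or $y=b$, i.e.\ either $a_i\in A$ or $b_i\in B$, both impossible by construction. Item (ii) is trivial since $Y$ is infinite.

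The main obstacle, and really the only substantive point, is (i). Here I intend to use the following auxiliary fact: \emph{in an infinite connected $T_1$-space $X$, every nonempty open set is infinite}. The proof is short: if a nonempty open $U\subseteq X$ were finite, then by $T_1$ it would be a finite union of closed singletons, hence closed as well; being clopen and nonempty in a connected $X$, it would equal $X$, contradicting the assumption that $X$ is infinite. Applying this to $X$ (and likewise to $Y$), every nonempty open set meets $X\setminus\{a_1,\dots,a_n\}$, so $A$ is dense in $X$, as needed.

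With (i)--(iii) in hand, Proposition~\ref{pr:3.1} applies to the triple $(A,B,C)$ and yields that $C=(X\times Y)\setminus F$ is connected, completing the proof.
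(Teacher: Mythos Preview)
Your proof is correct and follows essentially the same route as the paper's: take the complements in $X$ and $Y$ of the coordinate projections of the finite set, check that these satisfy the hypotheses of Proposition~\ref{pr:3.1}, and conclude. The only difference is cosmetic---the paper asserts density of the cofinite sets without justification, whereas you supply the short argument (no nonempty finite open set in an infinite connected $T_1$-space).
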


\begin{proof}
Let $C\subseteq X\times Y$ be a finite set. We choose finite sets  $A\subseteq X$ and $B\subseteq Y$ such that $C\subseteq A\times B$. Remark that $A_1=X\setminus A$ and $B_1=Y\setminus B$ are dense in $X$ and $Y$, respectively, and ${\rm cross}(A_1\times B_1)\subseteq (X\times Y)\setminus C$. It remains to apply Proposition~\ref{pr:3.1}.
\end{proof}

\begin{definition}
  {\rm A topological space $X$ is said to be {\it a $C_1$-space} (or {\it a space with the property $C_1$}),  if the complement to any finite subset has finite many components.}
\end{definition}

Let us observe that the real line ${\mathbb R}$ has the property $C_1$. Moreover, a finite product of $C_1$-spaces is a $C_1$-space.

Let $X$ and $Y$ be topological spaces and $P\subseteq X\times Y$. A mapping $f:P\to X\times Y$ is called {\it a cross-mapping}, if $f(p)\subseteq {\rm cross}(p)$ for every $p\in P$.

\begin{lemma}\label{l:2}
  Let $X$ and $Y$ be Hausdorff spaces, $U\subseteq X$, $V\subseteq Y$, let $f:U\times V\to X\times Y$ be a continuous cross-mapping, let $A\subseteq X$ and $B\subseteq Y$ be finite sets and the following conditions hold:
   \begin{enumerate}
   \item $U$, $V$ be connected $C_1$-spaces;

   \item  $f(U\times V)\subseteq {\rm cross}(A\times B)$.
   \end{enumerate}

  Then either $f(U\times V)\subseteq \{a\}\times Y$ for some $a\in A$, or  $f(U\times V)\subseteq X\times \{b\}$ for some $b\in B$.
\end{lemma}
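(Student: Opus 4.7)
The plan is to analyze $f$ on the dense subset $U' \times V'$ where $U' = U \setminus A$ and $V' = V \setminus B$, show that on each connected component the image lies in a single horizontal or vertical line, and then patch these via the connectedness of $U, V$. Setting $g_i = \pi_i \circ f$, I first note that because $U$ and $V$ are connected Hausdorff spaces, either they are singletons (a degenerate case handled directly by inspection of $g_1, g_2$) or they have no isolated points; in the latter case $U'$ and $V'$ are dense in $U$ and $V$. By the $C_1$-property, $U' = W_1^1 \sqcup \cdots \sqcup W_1^m$ and $V' = W_2^1 \sqcup \cdots \sqcup W_2^n$ with finitely many connected components. For $(u,v) \in U' \times V'$, combining $f(u,v) \in \mathrm{cross}(u,v)$, $f(u,v) \in \mathrm{cross}(A \times B)$, $u \notin A$, and $v \notin B$ forces $f(u,v) \in (\{u\} \times B) \cup (A \times \{v\})$, so in particular $g_1(u,v) \in \{u\} \cup A$ and $g_2(u,v) \in \{v\} \cup B$.

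Fix a component $W = W_1^i \times W_2^j$. For each $v_0 \in W_2^j$, the continuous map $u \mapsto g_2(u, v_0)$ carries the connected $W_1^i$ into the finite (hence discrete, $Y$ being Hausdorff) set $\{v_0\} \cup B$, so it is constant, say with value $\psi(v_0)$; the resulting $\psi : W_2^j \to Y$ is continuous with $\psi(v_0) \in \{v_0\} \cup B$. Since $W_2^j \cap B = \emptyset$, the sets $\{v : \psi(v) = v\}$ and $\psi^{-1}(b)$ for $b \in B$ form a finite partition of $W_2^j$ into pairwise disjoint closed subsets, and connectedness forces exactly one of them to equal $W_2^j$. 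In the ``identity'' case $g_2(u,v) = v$ on $W$; then $g_1(u,v) \in A$, and the same connectedness-plus-finiteness trick gives $g_1 \equiv a$ for some $a \in A$, so $f(u,v) = (a, v)$ on $W$. In the ``constant $b$'' case $g_2 \equiv b \neq v$ forces $g_1(u,v) = u$, so $f(u,v) = (u, b)$ on $W$. Thus each component carries a label of the form ``$a \in A$'' or ``$b \in B$''.

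The central step is to show that all labels agree. Because $V$ is connected $T_1$ with $|V| > 1$, every $b \in B$ is a limit point of $V \setminus B$, so $b \in \overline{W_2^j}$ for some $j$. Moreover, the ``adjacency graph'' on $\{W_2^j\}$, with edges $W_2^j \sim W_2^{j'}$ when $\overline{W_2^j} \cap \overline{W_2^{j'}} \neq \emptyset$, must be connected, since otherwise the equivalence classes of its transitive closure would partition $V$ into two or more disjoint nonempty closed sets $\bigcup_{W \in [s]} \overline{W}$, contradicting connectedness of $V$. For $W_2^j \sim W_2^{j'}$, pick $v^* \in \overline{W_2^j} \cap \overline{W_2^{j'}} \subseteq B$ and any $u_0 \in W_1^i$; continuity of $v \mapsto f(u_0, v)$ at $v^*$ yields $f(u_0, v^*) = (a, v^*)$ or $(u_0, b)$ depending on the label, computed as the limit from each side. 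Mixing labels would force $a = u_0$ with $a \in A$ and $u_0 \in U \setminus A$, a contradiction; so adjacent components share the same label. Chaining along the connected adjacency graph shows that for each fixed $i$ all labels $L(i, j)$ coincide as $j$ varies, and the symmetric argument on $U$ shows they coincide for fixed $j$ as $i$ varies. Combining, a single label $\ell$ governs every component of $U' \times V'$, so either $f(u,v) = (a,v)$ or $f(u,v) = (u,b)$ uniformly on the dense set $U' \times V'$, and continuity extends this identity to $U \times V$, proving the conclusion. The main obstacle is this label-coincidence step, which uses the $C_1$-hypothesis essentially: it reduces the problem to a finite adjacency graph whose connectedness is then forced by the connectedness of $U$ and $V$.
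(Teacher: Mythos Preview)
Your argument is correct. Both your proof and the paper's begin the same way: after disposing of the degenerate (singleton) cases, you pass to $U'\times V'=(U\setminus A)\times(V\setminus B)$, use the $C_1$-hypothesis to get finitely many product components, and show that on each component $f$ is either $(u,v)\mapsto(a,v)$ for a fixed $a\in A$ or $(u,v)\mapsto(u,b)$ for a fixed $b\in B$.

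The genuine difference is in the gluing step. The paper works with the two closed sets $Z_\alpha=\{z:\alpha(z)=x\}$ and $Z_\beta=\{z:\beta(z)=y\}$, observes that $\overline{U_\alpha}\cap\overline{U_\beta}$ is contained in the finite set $A\times B$, and then invokes Proposition~3.1 (the complement of a finite set in $U\times V$ is connected when $U,V$ are infinite connected $T_1$-spaces) to force one of $U_\alpha,U_\beta$ to be empty; a final connectedness argument on $f(U\times V)$ pins down the single $a$ or $b$. Your route is more combinatorial: you assign each product component a specific label in $A\sqcup B$, build the adjacency graph on the components of $V\setminus B$ (resp.\ $U\setminus A$), prove it is connected from the connectedness of $V$ (resp.\ $U$), and then compare limits at a shared boundary point $v^*\in B$ to show adjacent components must carry the same label. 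This avoids the auxiliary Proposition~3.1 entirely and yields the specific $a$ or $b$ directly, at the cost of a slightly longer case analysis on label types. Both arguments use the $C_1$-property in the same essential way (finiteness of the component set), but yours localizes the connectedness reasoning to the one-dimensional factors rather than to the two-dimensional product.
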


\begin{proof} If both $U$ and $V$ are finite, then (1) imply that $U$ and $V$ are one-point sets and the lemma follows from~(2). If $U$ is finite (one-point) and $V$ is infinite, then $F=\{z\in U\times V: f(z)\in {\rm cross}(A\times B)\setminus (A\times Y)\}$ is finite clopen subset of  $U\times V$. The connectedness of  $U\times V$ implies $F=\O$. Hence, $f(U\times V)\subseteq A\times Y$. Since $f$ is continuous,  $f(U\times V)\subseteq \{a\}\times Y$ for some $a\in A$.

Now let $U$ and $V$ be infinite. Then it follows from $(1)$ that $U$ and $V$ have no isolated points. Since $A_1=A\cap U$ and $B_1=B\cap V$ are closed and nowhere dense in $U$ and $V$, respectively, the set $$C=(U\times V)\cap{\rm cross}(A\times B)=(U\times V)\cap{\rm cross}(A_1\times B_1)$$ is closed and nowhere dense in $Z=U\times V$.

Let $\alpha:U\times V\to X$ and $\beta:U\times V\to Y$ be continuous functions such that $f(x,y)=(\alpha(x,y),\beta(x,y))$ for all $(x,y)\in Z$. Put
  $$
  Z_\alpha=\{(x,y)\in Z:\alpha(x,y)=x\}, \,\,\, Z_\beta=\{(x,y)\in Z:\beta(x,y)=y\}.
  $$

 Notice that $$P_\alpha=\{z\in Z_\alpha:\alpha(z)\in A\}=Z_\alpha\cap(A\times Y)=Z_\alpha\cap(A_1\times Y)$$ is nowhere dense in $Z$. Therefore, $Q_\alpha=\{z\in Z_\alpha:\alpha(z)\not\in A\}$ is dense in ${\rm int}_Z(Z_\alpha)$, where by ${\rm int}_Z(D)$ we denote the interior of $D\subseteq Z$ in $Z$, and by $\overline{D}$ we denote the closure of $D$ in $Z$.  Condition~$(2)$ implies that $Q_\alpha$ is contained in the closed set $\{z\in Z: \beta(z)\in B\}$. Hence, $$\overline{{\rm int}_Z(Z_\alpha)}\subseteq \overline{Q_\alpha}\subseteq \{z\in Z:\beta(z)\in B\},$$ i.e. $f(\overline{{\rm int}_Z(Z_\alpha)})\subseteq X\times B$. Similarly, $f(\overline{{\rm int}_Z(Z_\beta)})\subseteq A\times Y$.

  Since $f$ is a cross-mapping, $Z=Z_\alpha\cup Z_\beta$. Remark that $Z_\alpha$ and $Z_\beta$ are closed in $Z$. Let $$G=Z\setminus C.$$ Taking into account that  $C$ is closed and nowhere dense in $Z$, we have that $G$ is open and dense in $Z$. According to~(1) the sets $U\setminus A$ and $V\setminus B$ has finite many components, therefore, the set $G=(U\setminus A)\times (V\setminus B)$ has finite many components $G_1,\dots,G_k$. Then $G=\bigsqcup\limits_{i=1}^{k}G_i$, where all $G_i$ are closed in $G$. Hence, all the sets $G_i$ are clopen in $G$, in particular, open in $Z$. Notice that $Z_\alpha\cap Z_\beta=\{z\in Z: f(z)=z\}\subseteq f(Z)\subseteq {\rm cross}(A\times B)$. Thus, $G\cap Z_\alpha\cap Z_\beta=\emptyset$. Then $G_i\subseteq (Z_\alpha\cap G_i)\sqcup(Z_\beta\cap G_i)$, consequently $G_i\subseteq Z_\alpha$  or $G_i\subseteq Z_\beta$ for every $1\le i\le k$. Let
  $$
  I_\alpha=\{1\le i\le k: G_i\subseteq Z_\alpha\},\,\,\,   I_\beta=\{1\le i\le k: G_i\subseteq Z_\beta\},
  $$
  $$
  U_\alpha=\bigcup\limits_{i\in I_\alpha}G_i,\,\,\, U_\beta=\bigcup\limits_{i\in I_\beta} G_i.
  $$
  Remark that
  $$f(\overline{U_\alpha})\subseteq f(\overline{{\rm int}_Z(Z_\alpha)})\subseteq X\times B,\,\,\, f(\overline{U_\beta})\subseteq f(\overline{{\rm int}_Z(Z_\beta)})\subseteq A\times Y.$$
  Hence, for any $z=(x,y)\in \overline{U_\alpha}$ we have $\alpha(x,y)=x$ and $\beta(x,y)\in B$. Similarly, $\alpha(x,y)\in A$ and $\beta(x,y)=y$ for every $z=(x,y)\in \overline{U_\beta}$. Therefore, $z=f(z)\in A\times B$ for any $z\in \overline{U_\alpha}\cap\overline{U_\beta}$. Consequently, $Z_0=\overline{U_\alpha}\cap\overline{U_\beta}$ is finite.

  Denote $E=\overline{U_\alpha}\setminus Z_0$ and $D=\overline{U_\beta}\setminus Z_0$. Since by Proposition~\ref{pr:3.1} the set $Z\setminus Z_0$ is connected, nonempty and $Z\setminus Z_0=E\sqcup D$, taking into account that $\overline{E}\cap D=\O$ and $E\cap\overline{D}=\O$, we obtain $E=\O$ or $D=\O$. Assume that $E=\O$. Then $U_\beta$ is dense in $Z$ and $$f(Z)\subseteq f(\overline{U_\beta})\subseteq A\times Y.$$
  Since $U\times V$ is connected, $f(U\times V)$ is connected too, therefore, there is such $a\in A$ that $f(U\times V)\subseteq \{a\}\times Y$.
\end{proof}

\section{The main result}\label{sec:4}

\begin{proposition}\label{pr:4.1}
  Let  $X$ and $Y$ be $T_1$-spaces, $z_0\in X\times Y$ and let $(z_n)_{n=1}^{\infty}$ be $\gamma$-convergent to $z_0$ sequence of points $z_n=(x_n,y_n)\in X\times Y$. Then there exists $m\in\mathbb N$ such that $z_n\in {\rm cross}(z_0)$ for all $n\geq m$.
\end{proposition}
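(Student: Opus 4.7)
The plan is to argue by contradiction: suppose $z_n\to z_0$ in the cross-topology $\gamma$ but infinitely many terms avoid ${\rm cross}(z_0)$. Passing to a subsequence, we may assume $x_{n_k}\ne x_0$ and $y_{n_k}\ne y_0$ for all $k$, and then aim to exhibit a $\gamma$-open neighborhood of $z_0$ that misses every $z_{n_k}$, contradicting the convergence $z_n\to z_0$.

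The key preparatory move is to thin the subsequence further so that $(x_{n_k})_k$ is either constantly some value $x^*\ne x_0$, or consists of pairwise distinct points; and symmetrically for $(y_{n_k})_k$. This is always possible for a sequence in a set: either some value is attained infinitely often, giving a constant subsequence, or the range is infinite and a subsequence of pairwise distinct terms can be extracted. So after this reduction we fall into one of three cases.

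If the $x$-coordinates are constant with value $x^*\ne x_0$, then $W=(X\setminus\{x^*\})\times Y$ is open in the product topology—hence in the finer cross-topology $\gamma$—contains $z_0$, and is disjoint from the subsequence, yielding the contradiction. The case in which the $y$-coordinates are constant is symmetric.

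In the remaining case both $(x_{n_k})$ and $(y_{n_k})$ consist of pairwise distinct terms, each distinct moreover from $x_0$, resp.\ $y_0$. Then Proposition~\ref{pr:1} applies to $P=\{z_{n_k}:k\in\mathbb N\}$, so $P$ is $\gamma$-closed (indeed $\gamma$-discrete). Since $z_0\notin P$, the complement $(X\times Y)\setminus P$ is a $\gamma$-open neighborhood of $z_0$ disjoint from $P$, again contradicting $z_n\to z_0$. The only substantive ingredient is Proposition~\ref{pr:1}; the rest is routine subsequence bookkeeping, so I do not expect any real obstacle.
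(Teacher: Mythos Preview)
Your proof is correct and follows the same contradiction strategy as the paper: extract a subsequence of terms lying off ${\rm cross}(z_0)$ and exhibit a $\gamma$-neighborhood of $z_0$ missing it. The only difference is organizational---the paper folds your constant-coordinate cases into its inductive construction of a pairwise-distinct subsequence (the induction step implicitly uses convergence to rule out a coordinate value repeating infinitely often), whereas you split those cases off explicitly before invoking Proposition~\ref{pr:1}.
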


\begin{proof}
 Assume the contrary. Then by the induction on $k\in\mathbb N$ it is easy to construct a strictly increasing sequence of numbers $n_k\in\mathbb N$ such that $x_{n_i}\ne x_{n_j}$ and $y_{n_i}\ne y_{n_j}$ for distinct $i,j\in\mathbb N$, and $z_{n_k}\not\in {\rm cross}(z_0)$ for all $k\in\mathbb N$. Now the sequence $(p_k)_{k=1}^{\infty}$ of points $p_k=z_{n_k}$ converges to $z_0$, and from the other side the set $G=(X\times Y) \setminus\{p_k:k\in\mathbb N\}$ is a neighborhood of  $z_0$, a contradiction.
\end{proof}

A system $\mathcal{A}$ of subsets of a topological space $X$ is called a {\it $\pi$-pseudobase}~\cite{Tall},  if for every nonempty open set $U\subseteq X$  there exists a set $A\in \mathcal A$ such that ${\rm int}(A)\ne \emptyset$ and $A\subseteq U$.

\begin{theorem}
Let $X$ and $Y$ be Hausdorff spaces without isolated points and let $X$ and $Y$ have $\pi$-pseudobases which consist of connected compact $C_1$-sets,
and let \mbox{$f:X\times Y\to X\times Y$} be the identical mapping. Then $f\not\in B_1(X\times Y,(X\times Y,\gamma))$.
\end{theorem}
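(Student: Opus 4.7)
\smallskip

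\noindent\textbf{Proof plan.} Suppose toward contradiction that $f=\mathrm{id}\in B_1(X\times Y,(X\times Y,\gamma))$, witnessed by continuous mappings $f_n\colon X\times Y\to(X\times Y,\gamma)$ with $f_n(p)\to p$ in $\gamma$ for every $p\in X\times Y$. Since the coordinate projections $\pi_1,\pi_2$ are $\gamma$-continuous, each composition $\pi_i\circ f_n$ is $\tau$-continuous into the corresponding Hausdorff factor, and so the coincidence sets $\{(x,y):\pi_1(f_n(x,y))=x\}$ and $\{(x,y):\pi_2(f_n(x,y))=y\}$ are $\tau$-closed (preimages of the diagonal under a continuous map into a Hausdorff square). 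Consequently, for every $m\in\mathbb N$ the set
$$
E_m=\{p\in X\times Y:f_n(p)\in{\rm cross}(p)\text{ for all }n\ge m\}
$$
is $\tau$-closed, and Proposition~\ref{pr:4.1} applied pointwise gives $X\times Y=\bigcup_{m\in\mathbb N}E_m$.

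Next I exploit the $\pi$-pseudobase hypothesis. Choose connected compact $C_1$-sets $K\subseteq X$ and $L\subseteq Y$ with nonempty interiors. The set $W={\rm int}(K)\times{\rm int}(L)$ is open in the compact Hausdorff, hence Baire, space $K\times L$; an open subspace of a Baire space is Baire, so $W$ is itself Baire. From $W=\bigcup_m(W\cap E_m)$, with each $W\cap E_m$ closed in $W$, the Baire category theorem produces some $m$ and a nonempty set open in $W$ (hence open in $X\times Y$) contained in $E_m$. Applying the $\pi$-pseudobase condition once more inside a basic open rectangle sitting in this interior, I obtain connected compact $C_1$-sets $K'\subseteq X$ and $L'\subseteq Y$, each with nonempty interior, such that $K'\times L'\subseteq E_m$.

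Fix any $n\ge m$. Then $f_n|_{K'\times L'}\colon K'\times L'\to X\times Y$ is a continuous cross-mapping, and its image is $\gamma$-compact, so Proposition~\ref{pr:4} places it inside ${\rm cross}(A_n\times B_n)$ for finite $A_n\subseteq X$, $B_n\subseteq Y$. Lemma~\ref{l:2} now forces the dichotomy $f_n(K'\times L')\subseteq\{a_n\}\times Y$ for some $a_n\in A_n$, or $f_n(K'\times L')\subseteq X\times\{b_n\}$ for some $b_n\in B_n$. Because $X$ and $Y$ have no isolated points while $K'$ and $L'$ are connected with nonempty interior, both $K'$ and $L'$ are infinite, so I can select $p_1=(x_1,y_1),\,p_2=(x_2,y_2)\in K'\times L'$ with $x_1\ne x_2$ and $y_1\ne y_2$. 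Passing to a subsequence on which the same alternative of Lemma~\ref{l:2} holds for all $n\ge m$, say the first one, yields $\pi_1(f_n(p_1))=a_n=\pi_1(f_n(p_2))$; but $\gamma$-continuity of $\pi_1$ combined with $f_n(p_i)\to p_i$ forces $a_n\to x_1$ and $a_n\to x_2$ simultaneously, contradicting $x_1\ne x_2$. The second alternative is ruled out symmetrically.

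The crux of the argument is the Baire-category step: one must upgrade the pointwise information ``$f_n(p)\in{\rm cross}(p)$ eventually'', supplied by Proposition~\ref{pr:4.1}, to a uniform cross-mapping statement on a genuine open rectangle whose factors are connected compact $C_1$-sets. Once that is secured, Proposition~\ref{pr:4} traps the image of each tail $f_n$ inside a finite cross and Lemma~\ref{l:2} collapses it further to a single horizontal or vertical line, a situation plainly incompatible with pointwise convergence to the identity.
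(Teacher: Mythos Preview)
Your argument is correct and follows the same architecture as the paper's proof: use Proposition~\ref{pr:4.1} to cover $X\times Y$ by the closed sets $E_m$, invoke Baire category together with the $\pi$-pseudobase to locate a product $K'\times L'$ of connected compact $C_1$-sets on which every tail $f_n$ is a cross-mapping, then combine Proposition~\ref{pr:4} with Lemma~\ref{l:2} to force each $f_n(K'\times L')$ into a single horizontal or vertical line and contradict pointwise convergence to the identity. The only cosmetic differences are that the paper obtains the Baire property of $X\times Y$ globally (via a dense open locally compact subspace) rather than by localizing to one compact rectangle as you do, and that the paper finishes with a single index $N$ chosen so that $f_N(p_1),f_N(p_2)$ lie in disjoint basic rectangles, whereas you pass to a subsequence and appeal to uniqueness of sequential limits in a Hausdorff space; both routes work.
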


\begin{proof}
Assuming the contrary, we choose a sequence of continuous functions $f_n:X\times Y\to (X\times Y,\gamma)$ such that $f_n(x,y)\to (x,y)$ in $(X\times Y,\gamma)$ for all $(x,y)\in X\times Y$.

Remark that every $f_n:X\times Y\to X\times Y$ is continuous. Then for every  $n\in\mathbb N$ the set $P_n=\{p\in X\times Y: f_n(p)\in {\rm cross}(p)\}$ is closed. Hence, for every $n\in\mathbb N$ the set
$$
  F_n=\bigcap\limits_{m\geq n}P_m=\{p\in X\times Y: \forall m\ge n\,\, f_m(p)\in {\rm cross}(p) \}.
  $$
is closed too.  Moreover, by Proposition~\ref{pr:4.1}
  $$
  X\times Y=\bigcup\limits_{n=1}^\infty F_n.
  $$

The conditions of the theorem imply that $Z=X\times Y$ has a $\pi$-pseudobase of compact sets. Then the product contains an open everywhere dense locally compact subspace, in particular, the product $X\times Y$ is Baire. We choose a number $n_0\in\mathbb N$ and compact connected  $C_1$-sets $U\subseteq X$ and $V\subseteq Y$ such that  $U\times V\subseteq F_{n_0}$, $U_0={\rm int}(U)\ne \emptyset$ and $V_0={\rm int}(V)\ne \emptyset$.

Let $W=U\times V$. According to Proposition~\ref{pr:4}, there exist such sequences of finite sets $A_n\subseteq X$ and $B_n\subseteq Y$ that $f_n(W)\subseteq (A_n\times Y)\cup (X\times B_n)$ for every $n\in\mathbb N$. Since $X$ and $Y$ have no isolated points, the sets $U_0$ and $V_0$ are infinite. Take such points $p_1=(x_1,y_1), p_2=(x_2,y_2)\in U_0\times V_0$ that $p_1\not\in {\rm cross}(p_2)$. Since $X$ and $Y$ are Hausdorff, there exist neighborhoods $U_1$ and $U_2$ of $x_1$ and $x_2$ in $U_0$ and neighborhoods $V_1$ and $V_2$ of $y_1$ and $y_2$ in $V_0$, respectively, such that  $U_1\cap U_2= V_1\cap V_2=\O$. Now we choose a number $N\ge n_0$ such that  $f_N(p_1)\in U_1\times V_1$ and $f_N(p_2)\in U_2\times V_2$.

    Since $f_N|_W$ is a cross-mapping, Lemma~\ref{l:2} implies that $f_N(W)\subseteq \{a\}\times Y$ for some $a\in A$ or $f_N(W)\subseteq X\times \{b\}$ for some $b\in B$. Assume that $f_N(W)\subseteq \{a\}\times Y$  for some $a\in X$. Then $(U_1\times V_1)\cap (\{a\}\times Y)\ne\O$ and $(U_2\times V_2)\cap (\{a\}\times Y)\ne\O$. Therefore, $a\in U_1\cap U_2$, which is impossible.
  \end{proof}

\begin{corollary}
  Let  $n,m\ge 1$ and let $f:{\mathbb R}^n\times {\mathbb R}^m\to {\mathbb R}^n\times {\mathbb R}^m$ be the identical mapping. Then $f\not\in B_1({\mathbb R}^n\times {\mathbb R}^m,({\mathbb R}^n\times {\mathbb R}^m,\gamma))$.
\end{corollary}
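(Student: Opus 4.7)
The plan is to apply the preceding Theorem directly with $X={\mathbb R}^n$ and $Y={\mathbb R}^m$. Both factors are Hausdorff and have no isolated points, so the only nontrivial hypothesis to verify is that each factor admits a $\pi$-pseudobase consisting of connected compact $C_1$-sets.

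For each $k\ge 1$ I would take as pseudobase the collection of all closed Euclidean balls in ${\mathbb R}^k$ (closed cubes would also work). Every nonempty open set in ${\mathbb R}^k$ contains such a ball with nonempty interior, so this is indeed a $\pi$-pseudobase. Each closed ball is compact by Heine--Borel and connected by convexity, so the only point requiring an argument is the $C_1$ property of a closed ball $B\subseteq{\mathbb R}^k$.

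I would split into cases according to $k$. For $k=1$, the ball $B$ is a closed interval, and removing $j$ points from it leaves at most $j+1$ open subintervals, so the complement has finitely many components. For $k\ge 2$, a closed ball with a finite set removed remains path-connected: any two remaining points can be joined by a piecewise-linear arc that dodges the finitely many forbidden points (perturb a straight segment slightly, or use a polygonal detour through a generic intermediate point). Hence the complement has exactly one component, and $B$ is $C_1$ in either case.

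With the pseudobases in hand, all hypotheses of the Theorem are satisfied and the conclusion $f\notin B_1({\mathbb R}^n\times {\mathbb R}^m,({\mathbb R}^n\times {\mathbb R}^m,\gamma))$ is immediate. There is essentially no obstacle here; the corollary is a direct specialization of the Theorem, and the only content beyond quoting it is checking the $C_1$ property of closed Euclidean balls, which is elementary.
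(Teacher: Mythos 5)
Your proposal is correct and matches the paper's intent: the corollary is stated there without proof as a direct specialization of the Theorem, and your verification that closed balls (or cubes) form a $\pi$-pseudobase of connected compact $C_1$-sets in ${\mathbb R}^k$ — with the case split $k=1$ versus $k\ge 2$ for the $C_1$ property — supplies exactly the routine check the authors leave implicit (they instead note that ${\mathbb R}$ is $C_1$ and that finite products of $C_1$-spaces are $C_1$, which would cover closed cubes).
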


\begin{corollary}
   The collection $({\mathbb R}^n,{\mathbb R}^m,({\mathbb R}^n\times {\mathbb R}^m,\gamma))$ is not a Lebesgue triple for all  $n,m\ge 1$.
\end{corollary}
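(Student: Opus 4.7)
The plan is to exhibit the identity mapping $f:\mathbb{R}^n\times\mathbb{R}^m\to(\mathbb{R}^n\times\mathbb{R}^m,\gamma)$ as a separately continuous map that fails to be in the first Baire class; since being a Lebesgue triple requires every separately continuous map $X\times Y\to Z$ to be Baire-one, producing any single such counterexample suffices.

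First I would check that $f$ is separately continuous. Fix $x_0\in\mathbb{R}^n$ and consider $f^{x_0}:y\mapsto (x_0,y)$ mapping into $(\mathbb{R}^n\times\mathbb{R}^m,\gamma)$. To see continuity at a point $y_0$, take any $A\in\gamma$ with $(x_0,y_0)\in A$. The very definition of the cross-topology $\gamma$ produces open neighbourhoods $U\ni x_0$ and $V\ni y_0$ with $(\{x_0\}\times V)\cup(U\times\{y_0\})\subseteq A$; in particular $f^{x_0}(V)=\{x_0\}\times V\subseteq A$, so $V$ is a neighbourhood of $y_0$ inside $(f^{x_0})^{-1}(A)$. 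The argument for $f_{y_0}:x\mapsto(x,y_0)$ is symmetric, so $f$ is separately continuous.

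Second, I would invoke the immediately preceding corollary, which gives $f\notin B_1(\mathbb{R}^n\times\mathbb{R}^m,(\mathbb{R}^n\times\mathbb{R}^m,\gamma))$. Combining the two observations, $f$ is a separately continuous mapping $\mathbb{R}^n\times\mathbb{R}^m\to(\mathbb{R}^n\times\mathbb{R}^m,\gamma)$ that is not a pointwise limit of continuous mappings into the cross-topology, which is exactly the statement that the triple $(\mathbb{R}^n,\mathbb{R}^m,(\mathbb{R}^n\times\mathbb{R}^m,\gamma))$ is not Lebesgue.

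There is essentially no genuine obstacle: all of the substance is in the theorem (which uses the $\pi$-pseudobase hypothesis and Lemma~\ref{l:2}) and in the previous corollary specialising it to euclidean spaces. The present corollary performs only the two bookkeeping steps above — verifying separate continuity of the identity into the cross-topology, and translating ``identity $\notin B_1$'' into ``triple is not Lebesgue'' via the definition from the introduction.
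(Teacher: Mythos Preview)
Your proposal is correct and matches the paper's (implicit) approach: the corollary is stated without proof in the paper, as an immediate consequence of the previous corollary that the identity map is not in $B_1(\mathbb{R}^n\times\mathbb{R}^m,(\mathbb{R}^n\times\mathbb{R}^m,\gamma))$, together with the observation that this identity map is separately continuous into the cross-topology. You have simply made explicit the two bookkeeping steps the authors left to the reader.
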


\section{Separately continuous mappings on zero-dimensional spaces}\label{sec:5}

Recall that a nonempty topological space $X$ is {\it strongly zero-dimensional}, if it is completely regular and every finite functionally open cover of $X$ has a finite disjoint open refinement~\cite[с.~529]{Eng}.

\begin{theorem}
    Let $X$ be a strongly zero-dimensional metrizable space, let $Y$ and $Z$ be topological spaces. Then $(X,Y,Z)$ is a Lebesgue triple.
\end{theorem}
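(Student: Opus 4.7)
The plan is to show that every separately continuous $f:X\times Y\to Z$ is a pointwise limit of continuous maps $f_n:X\times Y\to Z$ that are constant in the $x$-variable on each block of a clopen partition of $X$ with shrinking mesh. Fix a compatible metric $d$ on $X$. Since $X$ is metrizable and strongly zero-dimensional, for each $n\in\mathbb N$ I would produce a partition $\mathcal U_n$ of $X$ into nonempty clopen sets of diameter less than $1/n$: start from the open cover $\{B(x,1/(2n)):x\in X\}$ and, using paracompactness of metrizable spaces combined with $\operatorname{Ind}(X)=0$, refine it to a pairwise disjoint cover by clopen sets. For every $U\in\mathcal U_n$ select a representative $x_U\in U$, and define $f_n(x,y)=f(x_U,y)$ whenever $x\in U\in\mathcal U_n$.

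I would then verify that each $f_n$ is continuous on $X\times Y$. Indeed, fix $(x_0,y_0)$ and let $U_0\in\mathcal U_n$ be the block containing $x_0$. Since $U_0$ is open in $X$, the set $U_0\times Y$ is an open neighborhood of $(x_0,y_0)$ on which $f_n$ depends only on $y$ and coincides with the continuous map $y\mapsto f^{x_{U_0}}(y)$ (by separate continuity of $f$ in the second variable). Hence $f_n$ is continuous at $(x_0,y_0)$.

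Pointwise convergence is the shrinking-mesh step. For fixed $(x_0,y_0)$, let $x_n$ denote the chosen representative of the unique member of $\mathcal U_n$ containing $x_0$. Then $d(x_n,x_0)<1/n$, so $x_n\to x_0$ in $X$; by continuity of $f$ in the first variable, $f_n(x_0,y_0)=f_{y_0}(x_n)\to f_{y_0}(x_0)=f(x_0,y_0)$. Therefore $f$ is the pointwise limit of the continuous functions $f_n$, which gives $f\in B_1(X\times Y,Z)$, so $(X,Y,Z)$ is a Lebesgue triple.

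The only substantive ingredient is the existence of the clopen partitions $\mathcal U_n$. In a metrizable space, strong zero-dimensionality is equivalent to $\operatorname{Ind}=0$, and combined with paracompactness this allows any open cover to be refined by a partition into clopen sets; the diameter bound comes automatically by starting with balls of radius $1/(2n)$. Granted this standard covering-dimension fact from \cite{Eng}, the rest is a direct step-function approximation argument in the $X$-variable, and neither $Y$ nor $Z$ need any structure beyond being topological spaces.
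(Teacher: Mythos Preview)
Your proof is correct and follows essentially the same approach as the paper: build a disjoint clopen cover of $X$ with mesh $<1/n$, choose a representative point in each block, and define $f_n$ by freezing the first variable at that representative, then use separate continuity to verify continuity of $f_n$ and pointwise convergence $f_n\to f$. The only cosmetic difference is that the paper obtains the clopen partition by first citing a locally finite clopen refinement (via \cite{Ellis}) and then disjointifying it by the standard well-ordering trick, whereas you invoke the equivalent dimension-theoretic fact from \cite{Eng} directly.
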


\begin{proof} Let $d$ be a metric on $X$, which generates its topology.
For every $n\in\mathbb N$ we consider an open cover $\mathcal B_n$ of $X$ by balls of the diameter $\le \frac 1n$. It follow from \cite{Ellis} that every $\mathcal B_n$ has locally finite clopen refinement $\mathcal U_n=(U_{\alpha,n}:0\le\alpha<\beta_n)$. For all $n\in\mathbb N$ let $V_{0,n}=U_{0,n}$ and $V_{\alpha,n}=U_{\alpha,n}\setminus \bigcup\limits_{\xi<\alpha}U_{\xi,n}$ if $\alpha>0$. Then $\mathcal V_n=(V_{\alpha,n}:0\le \alpha<\beta_n)$ is a locally finite disjoint cover of $X$ by clopen sets  $V_{\alpha,n}$ which refines  $\mathcal B_n$.

 Let $f:X\times Y\to Z$ be a separately continuous function. For all $n\in\mathbb N$ and $0\le\alpha<\beta_n$ we choose a point $x_{\alpha,n}\in V_{\alpha,n}$. Let us consider functions $f_n:X\times Y\to Z$ defined as the following:
  $$
  f_n(x,y)=f(x_{\alpha,n},y),
  $$
  if $x\in V_{\alpha,n}$ and $y\in Y$. Clearly, for every $n\in\mathbb N$ the function $f_n$ is jointly continuous, provided  $f$ is continuous with respect to the second variable. We show that $f_n(x,y)\to f(x,y)$ on $X\times Y$. Fix $(x,y)\in X\times Y$ and choose a sequence $(\alpha_n)_{n=1}^\infty$ such that  $x\in V_{\alpha_n,n}$. Since ${\rm diam}V_{\alpha_n,n}\to 0$,  $x_{\alpha_n,n}\to x$. Taking into account that $f$ is continuous with respect to the first variable, we obtain that
  $$
  f_n(x,y)=f(x_{\alpha_n,n},y)\to f(x,y).
  $$
  Hence, $f\in B_1(X\times Y,Z)$.
  \end{proof}

\end{document}